\DeclareMathAlphabet{\pazocal}{OMS}{zplm}{m}{n}
\newtheorem{theorem}{Theorem}[section]
\newtheorem{lemma}[theorem]{Lemma}
\newtheorem{proposition}[theorem]{Proposition}
\def\w#1{\mathop{:}\nolimits\!#1\!\mathop{:}\nolimits}
\numberwithin{equation}{section}
\numberwithin{theorem}{section}
\newcommand{\qed}{\hfill$\Box$}
\newenvironment{proof}{\begin{trivlist}\item[]{\em Proof:}\/}{%
\qed\end{trivlist}}
\newcommand{\E}{{\mathbb E}}
\newcommand{\Z}{{\mathbb Z}}
\newcommand{\R}{{\mathbb R}}
\newcommand{\C}{{\mathbb C\hspace{0.05 ex}}}
\newcommand{\N}{{\mathbb N}}
\newcommand{\T}{{\mathbb T}}
\newcommand{\cf}{{\mathbbm 1}}
\newcommand{\ci}{{\rm i}}
\newcommand{\rme}{{\rm e}}
\newcommand{\rmd}{{\rm d}}
\newcommand{\FT}[1]{\widehat{#1}}
\DeclareMathOperator*{\cov}{Cov}
\DeclareMathOperator*{\var}{Var}
\newcommand{\norm}[1]{\Vert #1\Vert}
\newcommand{\set}[1]{\{#1\}}
\newcommand{\mean}[1]{\langle #1\rangle}
\newcommand{\defem}[1]{{\em #1\/}}
\newcommand{\qand}{\quad\text{and}\quad}
\newcounter{jlisti}
\newcommand{\Cgen}{g_{\mathrm{c}}}
\newcommand{\Wgen}{G_{\mathrm{w}}}
\newcommand{\Mgen}{G_{\mathrm{m}}}
\newcommand{\be}{\begin{equation}}
\newcommand{\bes}{\begin{equation*}}
\newcommand{\en}{\end{equation}}
\newcommand{\ens}{\end{equation*}}
\date{\today}
\title{Summability of joint cumulants of nonindependent lattice fields}
\author{Jani Lukkarinen\thanks{\emailjani} , Matteo Marcozzi\thanks{\emailmatteo} , Alessia Nota\thanks{\emailalessia} \\[1em]
$\,^*,\,^\dag,\,^\ddag$\UHaddress\\[0.5em]
$\,^\ddag$\UBaddress}
\date{\today}
\newcommand{\email}[1]{E-mail: \tt #1}
\newcommand{\emailjani}{\email{jani.lukkarinen@helsinki.fi}}
\newcommand{\emailmatteo}{\email{matteo.marcozzi@helsinki.fi}}
\newcommand{\emailalessia}{\email{nota@iam.uni-bonn.de}}
\newcommand{\UHaddress}{\em University of Helsinki, Department of Mathematics and Statistics\\
\em P.O. Box 68, FI-00014 Helsingin yliopisto, Finland}
\newcommand{\UBaddress}{\em University of Bonn, Institute for Applied Mathematics\\
\em Endenicher Allee 60, D-53115 Bonn, Germany}
\begin{document}
\maketitle

\begin{abstract}
We consider two nonindependent
random fields $\psi$ and $\phi$ defined on a countable set $Z$.  For instance, 
$Z=\Z^d$ or $Z=\Z^d\times I$, where $I$ denotes a finite set of possible ``internal degrees of freedom'' such as spin.
We prove that, if the cumulants of both $\psi$ and $\phi$ are $\ell_1$-clustering up to order $2 n$, 
then all joint cumulants between $\psi$ and $\phi$ are 
$\ell_2$-summable up to order $n$, in the precise sense described in the text.
We also provide explicit estimates in terms of the related $\ell_1$-clustering norms, and derive a weighted $\ell_2$-summation 
property of the joint cumulants if the fields are merely $\ell_2$-clustering.
One immediate application of the results is given by a stochastic process $\psi_t(x)$
whose state is $\ell_1$-clustering at any time $t$: then the above estimates can be applied with 
$\psi=\psi_t$ and $\phi=\psi_0$ and we obtain uniform in $t$ 
estimates for the summability of time-correlations of the field.
The above clustering assumption is obviously satisfied by any $\ell_1$-clustering stationary state of
the process, and our original motivation for the control of the summability of time-correlations
comes from a quest for a rigorous control of the Green-Kubo correlation function in such a system.
A key role in the proof is played by the properties of non-Gaussian Wick polynomials and their connection to cumulants.
\end{abstract}

\section{Introduction and physical motivation}

In many problems of physical interest, the basic dynamic variable is a random field.  In addition to proper stochastic processes, such as particles evolving according to Brownian motion, the random field could describe for instance a density of particles of a Hamiltonian system with random initial data or after time-averaging.  

One particular instance of the second kind is the \defem{Green-Kubo formula} which connects the transport coefficients, such as thermal conductivity, to integrals over \defem{equilibrium time-correlations} of the current observable of the relevant conserved quantity, for instance, of the energy current.  The equilibrium time-correlations are cumulants of current fields 
between time zero and some later time.  The current fields are generated by distributing the initial data according to some fixed equilibrium measure and then solving the evolution equations: this yields a random field, even when the time-evolution itself is deterministic.

Hence, the control of correlation functions, i.e., cumulants, of random fields is a central problem for a rigorous study of transport properties.  One approach, which has been used both in practical applications and in direct mathematical studies,
is given by Boltzmann transport equations.  It is usually derived from the microscopic system by using moment hierarchies, such as the BBGKY hierarchy, and then ignoring higher order moments to close the hierarchy of evolution equations.  
Although apparently quite powerful a method, it has not been possible to give any meaningful general estimates for the accuracy or for regions of applicability of such closure approximations.  

The present work arose as part of a project aiming at a rigorous derivation of a Boltzmann transport equation in the kinetic scaling limit of the weakly nonlinear discrete Sch\"{o}dinger equation (DNLS).  This system 
describes the evolution of 
a complex lattice field $\psi_t(\mathbf{x})$, with $\mathbf{x}\in \Z^d$ and $t\ge 0$, by 
requiring that is satisfies the Hamiltonian evolution equations
\begin{align}\label{eq:defNLSx}
\ci\partial_t \psi_t(\mathbf{x}) = \sum_{\mathbf{y} \in \Z^d} 
\alpha(\mathbf{x}-\mathbf{y})\psi_t(\mathbf{y}) + \lambda |\psi_t(\mathbf{x})|^2 
\psi_t(\mathbf{x}) \, ,
 \end{align}
where the function $\alpha$ determines the ``hopping amplitudes'' and $\lambda>0$ is a coupling constant.
A kinetic scaling limit with a suitably chosen closure assumption predicts that the Green-Kubo correlation function of the energy density of this system satisfies a linearized phonon Boltzmann equation in the limit; the explicit form of the Boltzmann collision operator and discussion about the approximations involved is given in Sections 5 and 6 in \cite{LM15}, and we refer to \cite{spohn05,L16} for more details about the linearization procedure.

The method used in the derivation of the transport properties in such weak coupling limits are, naturally, based on perturbation expansions.  Advances have recently been made in controlling the related oscillatory integrals (see for instance \cite{erdyau99,NLS09,erdyau05a}), but for nonlinear evolution equations of the present type a major obstacle has been the lack of useful \defem{a priori} bounds for the correlation functions.  For instance, Schwarz inequality estimates of moments in the ``remainder terms'' of finitely expanded moment hierarchies has been used for this purpose for time-stationary initial data in \cite{NLS09}, which was inspired by the bounds from unitarity of the time-evolution of certain linear evolution equations first employed for the random Schr\"{o}dinger equation in \cite{erdyau99} and later extended to other similar models such as the 
Anderson model \cite{chen03} and a classical harmonic lattice with random mass perturbations \cite{ls05}.
However, as argued in \cite{LM15}, using moments instead of cumulants to develop the hierarchy could lead to loss of an important
decay property which is valid for cumulants but not for moments; we shall discuss this point further in Section \ref{sec:setting}.

In the present contribution we derive a generic result which allows to bound joint correlations of two random fields
in terms of estimates involving only the decay properties of each of the fields separately.  These estimates are immediately
applicable for obtaining \defem{uniform in time a priori} bounds for time-correlation functions of time-stationary fields.
In particular, they imply that if the initial state of the field is distributed according to an equilibrium measure which is $\ell_1$-clustering, then all time-correlations are $\ell_2$-summable.  The precise assumptions are described in Section 
\ref{sec:setting} and the result in Theorem \ref{th_unresult} there.

If both fields are Gaussian and translation invariant, more direct estimates involving discrete Fourier-transform become available. We use this in Section \ref{sec:Gaussian} to give an explicit example which shows that $\ell_1$-clustering of the fields does
not always extend to their joint correlations, hence showing that the increase of the power from $\ell_1$-clustering to $\ell_2$-summability of the joint correlations in the main theorem is not superfluous.

The result is a corollary of a bound which proves summability of cumulants of any observable with finite variance
with $\ell_p$-clustering fields, for $p=1$ and $p=2$.  The $p=2$ case is more involved than the $p=1$ case, since the present
bound requires taking the sum in a weighted $\ell_2$-space.  The precise statements and all proofs are given in Section \ref{sec:ell2clustering}.  
The proof is based on representation of cumulants using Wick polynomials.  We rely on the results and notations of \cite{LM15}, and for convenience of the reader we have summarized the relevant items in Appendix \ref{appendix}.
We also present a few immediate applications of these bounds and 
discuss possible further applications in Section \ref{sec:discussion}.

\subsection*{Acknowledgements}

We thank Antti Kupiainen, Sergio Simonella, and Herbert Spohn for useful discussions on the topic.
The work has been supported by the 
Academy of Finland
via the Centre of Excellence in Analysis and Dynamics Research (project 271983) 
and from an Academy Project (project 258302).   The work and the related discussions have partially occurred in  workshops
supported
by the French Ministry of Education through the grant ANR (EDNHS) and also by the
Erwin Schr\"{o}dinger Institute (ESI), Vienna, Austria.

\section{Notations and mathematical setting}\label{sec:setting}

We consider here \defem{complex lattice fields} $\psi:Z\to \C$ where $Z$ is any nonempty \defem{countable} index set.
We focus on this particular setup since it is the one most directly relevant for physical applications:
common examples would be $Z=\Z^d$ and $Z=\Z^d\times I$, where $I$ denotes a finite set of possible ``internal degrees of freedom'' 
such as spin.  The setup can also cover more abstract index sets, such as the sequence of  
coefficients in the Karhunen--Lo\`{e}ve decomposition of a stochastic process \cite{Loeve48, Loeve78, Adler07}, 
or distribution-valued random fields evaluated at suitably chosen sequence of test-functions (details about
the definition and properties of general random fields can be found for instance in \cite{glimmjaffe,Bogachev98} and in other sources discussing
the Bochner--Minlos theorem).

We also assume that the field is \defem{closed under complex conjugation}:
to every $x\in Z$ there is some $x_*\in Z$ for which $\psi(x)^* = \psi(x_*)$.
If needed, this can always be achieved by replacing the original index set $Z$ by $Z\times\set{-1,1}$ and defining
a new field $\Psi$ by setting $\Psi(x,1)=\psi(x)$ and $\Psi(x,-1)=\psi(x)^*$.  This procedure was in fact used 
 in \cite{LM15,NLS09} to study the DNLS example mentioned above,  
resulting in the choice $Z=\Z^d\times \set{-1,1}$.

A \defem{random lattice field} on $Z$ is then a collection of random variables $\psi(x)$, $x\in Z$,
on the probability space $(\Omega,\mathcal{M},\mu)$,
where $\Omega$ denotes the sample space, $\mathcal{M}$ the $\sigma$-algebra of measurable events,
and $\mu$ the probability measure. 
We consider here two random fields $\psi$ and $\phi$ which are defined on the same probability space.
We denote the expectation over the measure $\mu$ by $\E$. 

The \defem{$n$:th connected correlation function} $u_n$ of the field $\psi$ is a
map $u_n:Z^n\to \C$ which is defined as the cumulant of the $n$ random variables
obtained by evaluating the field at the argument points; explicitly, 
\begin{align}
 u_n(x) := \kappa[\psi(x_1),\ldots,\psi(x_n)]\, , \qquad x\in Z^n.
\end{align}
We employ here the notations and basic results for cumulants and the related Wick polynomials, as derived in \cite{LM15}:
a summary of these is also included in Appendix \ref{appendix}.

In physics, one often encounters random fields defined on the $d$-dimensional cubic lattice, with $Z=\Z^d$.
One could then study the decay properties of such functions as $|x|\to \infty$ by using the standard
$\ell_p$-norms over $(\Z^d)^n$.  However, this is typically too restrictive for physical applications:
it would imply in particular that both the first and the second cumulant, i.e., the mean and the variance, of the random variable $\psi(x)$ 
decay as $|x|\to \infty$, and thus the field would be almost surely ``asymptotically zero'' at infinity.
Instead, many stationary measures arising from physical systems are \defem{spatially translation invariant}:
the expectation values remain invariant if all of the fields $\psi(x)$ are replaced by $\psi(x+x_0)$ 
for any given $x_0\in \Z^d$.  Since this implies also translation invariance of all correlation functions,
they cannot decay at infinity then, unless the field is almost surely zero \defem{everywhere}.

To cover also such nondecaying stationary states, one uses instead of the direct $\ell_p$-norms of the function $u_n$, 
the so-called \defem{$\ell_p$-clustering norms} of the field $\psi$ defined as follows: for $1\le p<\infty$ and $n\in \N_+$ we set
\begin{align}\label{eq:defclnorm}
 \norm{\psi}^{(n)}_p := \sup_{x_0\in Z} \biggl[\sum_{x\in Z^{n-1}} 
 \left|\kappa[\psi(x_0),\psi(x_1),
 \ldots,\psi(x_{n-1})]\right|^p \biggr]^{1/p}\, ,
\end{align}
and define analogously $\norm{\psi}^{(1)}_p := \sup_{x_0\in \Z^d} |\E[\psi(x_0)]|$ (note that $\kappa[X]=\E[X]$ for any random variable $X$).  We shall also use the corresponding $p=\infty$ norms, which coincide with the standard
sup-norms of $u_n$, namely, $\norm{\psi}^{(n)}_\infty=\sup_{x\in Z^n} \left|\kappa[\psi(x_1),\psi(x_2),\ldots,\psi(x_{n})]\right|= \norm{u_n}_\infty$.  Since the norms concern $L^p$-spaces over a counting measure, they are decreasing in $p$, i.e.,
$\norm{\psi}^{(n)}_p\ge \norm{\psi}^{(n)}_{p'}$ if $p\le p'$.  (This follows from the bound $|u_n(x)|\le \norm{\psi}^{(n)}_p$, valid for all $x$ and $p$.)

For a translation invariant measure with $Z=\Z^d$, we can translate $x_0$ to the origin in the definition (\ref{eq:defclnorm}),
and, by a change of variables $x_n=x_0+y_n$, obtain the simpler expression
\begin{align}
 \norm{\psi}^{(n)}_p = \biggl[\sum_{y\in (\Z^d)^{n-1}} 
 \left|\kappa[\psi(0),\psi(y_1),
 \ldots,\psi(y_{n-1})]\right|^p \biggr]^{1/p}\, .
\end{align}
The summation here goes over the \defem{displacements} $y_i$ of the argument $x_{i}$ from the reference position $x_0=0$.
The definition is tailored for random fields which become asymptotically independent
for far apart regions of the lattice, i.e., when $|y_i|\to \infty$ above.  
For translation non-invariant measures, finiteness of the norm (\ref{eq:defclnorm}) yields a uniform 
estimate for the speed of asymptotic independence of the field. Let us use the opportunity to stress that it is crucial to use the cumulants, not moments, 
above: similar moments of the field would not decay as the separation grows, even if the field values would become independent (see \cite{LM15} for more discussion
about this point).

We now call a random field $\psi$ \defem{$\ell_p$-clustering} if 
$\norm{\psi}^{(n)}_p<\infty$ for all $n=1,2,\ldots$.  In particular, this requires that all of the cumulants, which define
the connected correlation functions $u_n$, need to exist.  From the iterative definition of cumulants mentioned in the Appendix, or from the inversion formula expressing cumulants in terms of moments,
it clearly suffices that $\E[|\psi(x)|^n]<\infty$ for all $x\in Z$.
We also say that the field $\psi$ is \defem{$\ell_p$-clustering up to order $m$} if $\norm{\psi}^{(n)}_p<\infty$ for all $n\le m$.
For such a field, we use the following constants to measure its ``magnitude'': we set
\begin{align}\label{eq:defMN}
 M_N(\psi;p) := \max_{1\le n\le N} \left(\frac{1}{n!}\norm{\psi}^{(n)}_p\right)^{1/n}\, .
\end{align}
Clearly, the definition yields an increasing sequence in $N$ up to the same order in which
the field is $\ell_p$-clustering.  We use the constants $M_N$ to control the increase of the clustering norms.  It is conceivable that
in special cases other choices beside (\ref{eq:defMN}) could be used with the estimates below to arrive at sharper bounds than those stated in the theorems. 
However, the above choice is convenient for our purposes since it leads to simple combinatorial estimates, increasing typically only factorially in the degree of the cumulant.
It is possible to think of the numbers $M_N$ as measuring the range of values the field can attain.  For instance, 
if $Z=\set{0}$ and $\psi(0)$ is a random variable which almost surely belongs to the interval $[-R,R]$ with $R>0$, then $M_n(\psi;p)$
is independent of $p$ (since there is only one point $0$) and $M_n=c_n R$ where $c_n$ remains order one, uniformly in $n$ (see, for instance, Lemma C.1 in \cite{ls05}).

After these preliminaries, we are ready to state the main result:
\begin{theorem}\label{th_unresult}
 Suppose $\psi$ and $\phi$ are random lattice fields which are closed under complex conjugation and defined on the same probability space.  Assume that $\phi$ is $\ell_1$-clustering and $\psi$ is $\ell_\infty$-clustering, both up to order $2 N$ for some $N\in \N_+$.
 Then their joint cumulants satisfy the following $\ell_2$-estimate for any $n,m\in \N_+$ for which $n,m\le N$,
 \begin{align}
  & \sup_{x'\in Z^{m}} 
  \biggl[\sum_{x \in Z^n} \big\vert \kappa[\psi(x'_1),
 \ldots,\psi(x'_m),\phi(x_1),\ldots,\phi(x_n)] \big\vert^2 \biggr]^{1/2}
 \leq (\mathfrak{M}_{m,n} \gamma^m)^{n+m} (n+m)! \, ,
\end{align}
where $\mathfrak{M}_{m,n} := \max(M_{2m}(\psi; \infty),M_{2n}(\phi;1))$ and $\gamma= 2 \rme\approx 5.44$.
In particular, all of the above sums are then finite.
\end{theorem}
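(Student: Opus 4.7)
My plan is to deduce Theorem \ref{th_unresult} as a corollary of the general cumulant-summability bound announced as the main technical content of Section \ref{sec:ell2clustering}: the summability of cumulants of an arbitrary finite-variance observable together with copies of an $\ell_1$-clustering field. The reduction is natural. For fixed $x' \in Z^m$, I would regard $Y(x') := \psi(x'_1) \cdots \psi(x'_m)$ as a single random variable, apply the main theorem with observable $Y(x')$ and field $\phi$, and obtain a bound on $\sum_{x \in Z^n} |\kappa[Y(x'),\phi(x_1),\ldots,\phi(x_n)]|^2$ in terms of $\|Y(x')\|_{L^2}$ and the $\ell_1$-clustering norms of $\phi$. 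To pass from this ``cumulant of a product'' to the multilinear joint cumulant that actually appears in the statement, I would use the Leonov--Shiryaev formula to expand $\kappa[Y(x'),\phi(x_1),\ldots,\phi(x_n)]$ as a sum over set partitions of $\{1,\ldots,m\}$ of products of joint $\psi$-$\phi$ cumulants, and invert the resulting triangular system (e.g.\ by M\"obius inversion on the partition lattice) to express the target cumulant as a finite linear combination of $\kappa[Y(x''),\phi(x)]$-type objects with $x''$ ranging over subsequences of $x'$.

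The second ingredient is a uniform-in-$x'$ bound on $\|Y(x')\|_{L^2}^2 = \E[|\psi(x'_1)|^2 \cdots |\psi(x'_m)|^2]$ (using that the field is closed under complex conjugation). Expanding this $2m$-fold moment via the standard moment--cumulant formula as a sum over set partitions of $\{1,\ldots,2m\}$, and bounding each resulting $\psi$-cumulant pointwise by the $\ell_\infty$-clustering hypothesis and the definition (\ref{eq:defMN}) of $M_N$, one obtains a bound of the form $\|Y(x')\|_{L^2}^2 \le C_m\, M_{2m}(\psi;\infty)^{2m}$, where the combinatorial factor $C_m$ is controlled by a Bell-number count of partitions of $\{1,\ldots,2m\}$ and thus grows like $(2m)!\,\rme^{2m}$. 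This Bell-type counting is the origin of the factor $\gamma^m$ with $\gamma = 2\rme$ in the final statement; the $\phi$-variables are treated more efficiently by $\ell_1$-summation inside the Section \ref{sec:ell2clustering} theorem and do not contribute an analogous factor.

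The main obstacle is the combinatorial bookkeeping required to collapse the contributions of the Leonov--Shiryaev expansion, the M\"obius inversion, the Section \ref{sec:ell2clustering} theorem, and the moment-cumulant expansion of $\|Y(x')\|_{L^2}^2$ into the clean final form $(\mathfrak{M}_{m,n} \gamma^m)^{n+m}(n+m)!$. Each of these steps contributes a nested sum over set partitions whose count grows super-factorially, and the factor $1/n!$ built into the definition (\ref{eq:defMN}) of $M_N$ is precisely what is needed to cancel these factorials via Stirling-type estimates. One must verify in particular that (i) the final $(n+m)!$ absorbs the combined partition counts of all expansions, (ii) the $\gamma^m$ collects only those contributions that genuinely scale with $m$ (the number of $\psi$-variables, which are controlled only in sup-norm), and (iii) the Cauchy--Schwarz step implicit in the $\ell_2$-summability result is applied before any factorization that would introduce spurious $n$-dependence into $\gamma$. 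The non-Gaussian Wick-polynomial formalism of \cite{LM15} summarized in Appendix \ref{appendix} should play a central role here, providing the algebraic identities that organize the cumulant expansions without superfluous combinatorial overhead.
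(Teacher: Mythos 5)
Your first ingredient is sound and parallels the paper: applying the general bound of Theorem \ref{th_hom} with an observable built out of the $\psi$-variables, and controlling its second moment uniformly in $x'$ by a partition expansion and the $\ell_\infty$-clustering of $\psi$, giving a constant of size $M_{2m}(\psi;\infty)^{2m}\rme^{2m}(2m)!$ (the paper obtains exactly this from Lemma \ref{th:ell1mainest} applied to $\Psi_m(x',x')$). The genuine gap is the claimed inversion step. For $n\ge 2$ the Leonov--Shiryaev expansion of $\kappa[Y(x'),\phi(x_1),\ldots,\phi(x_n)]$, $Y(x')=\psi(x'_1)\cdots\psi(x'_m)$, contains correction terms which are \emph{products} of several mixed cumulants, $\kappa[\psi(x')_{A'_1},\phi(x)_{A_1}]\,\kappa[\psi(x')_{A'_2},\phi(x)_{A_2}]\cdots$, in which the $\phi$-arguments are split among the factors, as well as terms carrying pure-$\psi$ factors such as $\E[\psi(x'_1)]\,\kappa[\psi(x'_2),\ldots,\psi(x'_m),\phi(x_1),\ldots,\phi(x_n)]$ (the plain product is not centered). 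Hence there is no finite linear combination of objects of the form $\kappa[\psi(x'')\text{-product},\phi(x_1),\ldots,\phi(x_n)]$ that reproduces the multilinear joint cumulant: M\"obius inversion only turns the identity into a recursion in which the unknown lower-order multilinear mixed cumulants reappear. Consequently the $\ell_2$-estimate cannot be obtained in one shot from Theorem \ref{th_hom} plus ``inversion''; one must bound the $\ell_2(Z^n)$-norms of the product terms, whose $x$-sums factorize over the clusters containing $\phi$-indices, and each factor requires precisely the statement being proven at lower order. In other words, an induction is unavoidable, and your proposal never sets one up.

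This is exactly how the paper proceeds: Theorem \ref{th_unresult} is item 1 of Theorem \ref{th_weigheted}, proved by induction on $m$ (with arbitrary $n\le N$), and with the observable taken to be the Wick polynomial $X=\w{\psi(x'_1)\cdots\psi(x'_m)}$ rather than the plain product. By Proposition \ref{th:W_mult_prop} this choice removes every cluster internal to the $\psi$-block, so the correction $\pazocal{Q}(x',x)$ between $\E[\w{\psi(x'_1)\cdots\psi(x'_m)}\w{\phi(x_1)\cdots\phi(x_n)}]$ and the target cumulant is a sum over partitions with at least two clusters, each cluster a mixed cumulant with strictly fewer than $m$ variables $\psi$ \emph{and} strictly fewer than $n$ variables $\phi$; Minkowski's inequality, factorization of the $x$-sum over clusters, the induction hypothesis, and the combinatorial bound \eqref{eq:comb_est} then control $\norm{\pazocal{Q}(x',\cdot)}_{\ell_2}$, while $\norm{\pazocal{P}(x',\cdot)}_{\ell_2}$ is handled by Theorem \ref{th_hom} and Lemma \ref{th:ell1mainest} as you envisage. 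Note also that $\gamma=2\rme$ is chosen at the end so that $(2\rme)^{n+m}+(\rme/\gamma)^{n+m}\gamma^{m(m+n)}\le\gamma^{m(m+n)}$ closes the induction for $m\ge 2$; it is not simply the Bell-number factor from the $\psi$-moment bound, as your second paragraph suggests. Your route can be repaired along these lines (keeping the plain product is possible if you add the induction on $m$, treat the extra pure-$\psi$ cluster factors by $\ell_\infty$-clustering, and factorize the $\ell_2$-sums over $\phi$-clusters), but as written the ``triangular system inversion'' replaces the essential inductive step with an identity that does not hold.
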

Loosely speaking, one can say that an $\ell_1$-clustering random field can have at worst $\ell_2$-summable joint correlations.
We have stated the result in a form which assumes that the field $\psi$ is $\ell_\infty$-clustering.  
As mentioned above, the clustering norms are decreasing in the index: hence, the above result also holds if
$\psi$ is $\ell_q$-clustering for any $1\le q<\infty$.  One could then also replace the constants 
$\mathfrak{M}_{m,n}$ using the corresponding $\ell_q$-clustering norms, $\max(M_{2m}(\psi; q),M_{2n}(\phi;1))$.  However, 
these constants are always larger than $\mathfrak{M}_{m,n}$ and thus can only worsen the bound.

This result is a consequence of a more general covariance bound given in Theorem \ref{th_hom}.  
There we also give a version of the estimate for fields $\phi$ which are merely $\ell_2$-clustering.
The price to pay for the relaxation of the norms is an appearance of a weight factor in the $\ell_2$-summation,
see Theorem \ref{th_weigheted} for the precise statement.
Before going into the details of the proofs, let us
go through a special case clarifying the assumptions and the result.

\section{An example: translation invariant Gaussian lattice fields}
\label{sec:Gaussian}

In this section, we consider real valued
Gaussian random fields $\psi$ and $\phi$ on $Z=\Z$ and assume that both fields have a zero mean and are invariant under spatial translations.  Their joint measure is then determined by giving three functions $F_1$, $F_2$, $G\in \ell_2(\Z,\R)$ for which
\begin{align}\label{eq:Gaussex}
 \mean{\psi(x)\psi(y)} = F_1(x-y)\, , \quad
 \mean{\phi(x)\phi(y)} = F_2(x-y)\, , \quad
 \mean{\psi(x)\phi(y)} = G(x-y)\,.
\end{align}
The covariance operator needs to be positive semi-definite.  By first using Parseval's theorem
and then computing the eigenvalues of the remaining $2\times 2$ -matrix, we find that this is guaranteed by requiring that the Fourier-transforms of the above functions, all of which belong to 
$L^2(\T)$, satisfy almost everywhere
\begin{align}
 \FT{F}_1(k)\ge 0\, , \quad 
 \FT{F}_2(k)\ge 0\, , \quad 
 |\FT{G}(k)|^2\le \FT{F}_1(k)\FT{F}_2(k)\, .
\end{align}
These three conditions hence suffice for the existence of a unique Gaussian measure on distributions on $\Z$ 
satisfying (\ref{eq:Gaussex}); details about such constructions are given for instance in \cite{Bogachev98, Abrahamsen97}.

The last condition restricts the magnitude of the correlations, and it implies that if each of the above fields is
$\ell_2$-clustering, then their correlations are $\ell_2$-summable (simply because then $\FT{G}(k) \in L^2(\T)$, and thus its inverse Fourier transform
gives a function $G\in \ell_2(\Z)$).  Hence, one might wonder if the main theorem could, in fact, be strengthened to show that $\ell_1$-clustering of the fields 
implies $\ell_1$-summability of the joint correlations.  The following example shows that this is not the case.

\subsection{$\ell_1$-clustering fields whose joint correlations are not $\ell_1$-summable}

Let us consider two i.i.d.\ Gaussian fields $\psi$ and $\phi$ whose correlations are determined by the function
\begin{align}
 G(x) = \frac{1}{\pi x} \sin\left(\frac{\pi }{2 }x\right) , \ x\ne 0\,,\qquad G(0)=\frac{1}{2}\, .
\end{align}
For such i.i.d.\ fields $F_1(x)=\cf(x=0)=F_2(x)$ which is equivalent to $\FT{F}_1(k)=1=\FT{F}_2(k)$ for all $k\in \T$.  Now for all $x\in \Z$, clearly
\begin{align}
 G(x) = \int_{-1/4}^{1/4}\!\rmd k\, \rme^{\ci 2\pi x k} \, ,
\end{align}
and thus $\FT{G}(k) = \cf(|k|<\frac{1}{4})\le 1=\sqrt{\FT{F}_1(k)\FT{F}_2(k)}$.  Therefore, such $G$ indeed defines a possible correlation between the fields $\psi$ and $\phi$.

For such Gaussian fields, all cumulants of order different from $n=2$ are zero.  We also have $\sup_{x\in \Z} \sum_{y\in \Z} |F_1(x-y)|=1$,
and, as $F_2=F_1$,
both fields are $\ell_1$-clustering, with $\norm{\psi}^{(2)}_1=1=\norm{\phi}^{(2)}_1$ and
$\norm{\psi}^{(n)}_1=0=\norm{\phi}^{(n)}_1$ for any other $n$.  However, their joint correlations satisfy for any $x'\in \Z$
\begin{align} 
 \sum_{x\in \Z} |\kappa[\psi(x'),\phi(x)]| = \sum_{y\in \Z} |G(y)| =\frac{1}{2} + 2 \sum_{y=1}^{\infty} 
 \frac{1}{\pi y} \left|\sin\left(\frac{\pi }{2 }y\right)\right|=\frac{1}{2} + \frac{2}{\pi} \sum_{n=0}^{\infty} 
 \frac{1}{2 n+1} = \infty\, .
\end{align}
Thus the joint correlations are not $\ell_1$-summable.

In contrast, $\sup_{x'}\sum_{x} |\kappa[\psi(x'),\phi(x)]|^2<\infty$, since it is equal to $\sum_y |G(y)|^2$ and $G\in \ell^2(\Z)$.

\section{$\ell_2$-summability of joint correlations of $\ell_p$-clustering fields}
\label{sec:ell2clustering}

\begin{theorem}\label{th_hom}
 Consider a random lattice field $\phi$ on a countable set $Z$, defined on a probability space 
 $(\Omega,\mathcal{M},\mu)$ and closed under complex conjugation.  Suppose that $\phi$ is $\ell_p$-clustering
 up to order $2 N$ for some $N\in \N_+$, and let $M_N(\phi;p)$ be defined as in (\ref{eq:defMN}).
 Suppose also $X\in L^2(\mu)$, i.e., $X$
  is a random variable with finite variance.  
\begin{enumerate}
 \item If $p=1$ and $n\le N$, we have a bound
 \begin{align}
  & 
  \biggl[\sum_{x \in Z^n} \big\vert \kappa[X,\phi(x_1),\ldots,\phi(x_n)] \big\vert^2 \biggr]^{1/2}
 \leq \sqrt{\cov(X^*,X)} M_{2 n}(\phi;1)^n \rme^n \sqrt{(2 n)!} \, .
\end{align}
 \item If $p=2$ and $n\le N$, we have a bound
 \begin{align}
  & \sup_{x'\in Z^n} 
  \biggl[\sum_{x \in Z^n} |\Phi_n(x',x)| \big\vert \kappa[X,\phi(x_1),\ldots,\phi(x_n)] \big\vert^2 \biggr]^{1/2}
 \leq \sqrt{\cov(X^*,X)} M_{2 n}(\phi;2)^{2 n} \rme^{2 n} (2 n)! \, ,
\end{align}
where $\Phi_n(x',x):=\E\!\left[\w{\phi(x'_1)^*\phi(x'_2)^*\cdots\phi(x'_n)^*}
\w{\phi(x_1)\phi(x_2)\cdots \phi(x_n)}\right]$.
\end{enumerate}
\end{theorem}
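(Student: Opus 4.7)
The plan is to exploit the representation of cumulants via non-Gaussian Wick polynomials recalled in Appendix \ref{appendix} from \cite{LM15}. The key identity, valid for $X\in L^2(\mu)$, is
\begin{equation*}
\kappa[X,\phi(x_1),\ldots,\phi(x_n)] = \E\!\left[(X-\E X)\cdot\w{\phi(x_1)\cdots\phi(x_n)}\right],
\end{equation*}
so each cumulant is realised as an $L^2(\mu)$-inner product of the centered variable $X-\E X$ with a Wick polynomial. A second ingredient is the Wick inner-product formula, which expands $\Phi_n(x',x)$ as a sum over set partitions of the combined $2n$ indices in which every block contains indices both from $x'$ and from $x$; each block of size $k$ contributes a cumulant of $\phi$ of order $k$.

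For part (1), I would dualize the $\ell_2$-norm in $x$:
\begin{equation*}
\left(\sum_{x\in Z^n}|\kappa[X,\phi(x_1),\ldots,\phi(x_n)]|^2\right)^{1/2}=\sup_{\|g\|_2\leq 1}\left|\E\!\left[(X-\E X)\cdot W_g^{*}\right]\right|,
\end{equation*}
with $W_g:=\sum_{x\in Z^n}g(x)\w{\phi(x_1)\cdots\phi(x_n)}$. A single Cauchy--Schwarz in $L^2(\mu)$ extracts the factor $\sqrt{\cov(X^*,X)}$ and reduces the bound to $\E[|W_g|^2]\leq C\|g\|_2^2$. Since $\E[|W_g|^2]=\sum_{x,y}g(x)^{*}g(y)\Phi_n(x,y)$, Schur's test together with the Hermitian symmetry of $\Phi_n$ yields $C=\sup_{x'}\sum_{x}|\Phi_n(x',x)|$. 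The final step is combinatorial: inserting the partition expansion of $\Phi_n$, each cumulant of $\phi$ that contains at least one $x_j$-index is summed over that index using the $\ell_1$-clustering norm $\|\phi\|^{(k)}_1$, and the number of partitions is bounded crudely by $(2n)!$. Together with elementary estimates from the definition of $M_{2n}(\phi;1)$, this yields the stated $\rme^n\sqrt{(2n)!}\,M_{2n}(\phi;1)^n$.

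Part (2) is more delicate because $\sup_{x'}\sum_{x}|\Phi_n(x',x)|$ cannot be bounded by the $\ell_2$-clustering norm alone (see the Gaussian example in Section \ref{sec:Gaussian}), so the weight $|\Phi_n(x',x)|$ must be actively used. I would dualize the weighted $\ell_2$-norm by setting $h(x):=|\Phi_n(x',x)|^{1/2}g(x)$ with $\|g\|_2=1$, apply the same Cauchy--Schwarz in $L^2(\mu)$, and bound the resulting quadratic form with kernel $|\Phi_n(x',x)|^{1/2}|\Phi_n(x',y)|^{1/2}\Phi_n(x,y)$ by successive Schur and $\ell_2$-Cauchy--Schwarz steps. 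Each of the resulting unweighted $\ell_2$-sums of $\Phi_n$-factors can be controlled through the partition expansion together with the $\ell_2$-clustering norms $\|\phi\|^{(k)}_2$ and Young-type inequalities. The doubling of the exponents, so that the bound becomes $M_{2n}(\phi;2)^{2n}\rme^{2n}(2n)!$ rather than its square root, reflects the appearance of two Wick polynomials inside the weighted inner product instead of one.

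The main obstacle in both parts is the combinatorial bookkeeping of the partition expansion of $\Phi_n$: one must organise the sums over the $x$-indices so that every block containing at least one $x_j$-index is absorbed by exactly one clustering norm, while controlling the number of partitions of $2n$ by factorial estimates. In part (2) there is the further subtlety that $\ell_1$-summability of individual cumulants is not available, so the weight $|\Phi_n(x',x)|$ must be used to keep all intermediate sums in $\ell_2$; the necessity of the weight is essentially forced by the example in Section \ref{sec:Gaussian}.
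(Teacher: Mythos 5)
Your part (1) follows the paper's own route: the Wick representation (\ref{eq:mainWick}), Cauchy--Schwarz in $L^2(\mu)$ against $\sum_x f(x)\w{\phi(x_1)\cdots\phi(x_n)}$ for test functions $f$, a Schur-type bound of the quadratic form by $\sup_{x'}\sum_x|\Phi_n(x',x)|$, and the partition expansion of $\Phi_n$ in which every block must meet both index groups (the paper runs the duality over finitely supported $f$ and extends by Riesz representation, a technicality you should also include). One quantitative caveat: your combinatorics is too crude for the stated constant. Bounding ``the number of partitions by $(2n)!$'' while each block $S$ contributes $\norm{\phi}^{(|S|)}_1\le|S|!\,M_{2n}(\phi;1)^{|S|}$ gives a bound of size $M_{2n}(\phi;1)^{2n}((2n)!)^2$, whose square root is $M_{2n}(\phi;1)^n(2n)!$ rather than $M_{2n}(\phi;1)^n\rme^n\sqrt{(2n)!}$; what is actually needed is the sharper inequality $\sum_{\pi}\prod_{S\in\pi}|S|!\le(2n)!\,\rme^{2n}$ used in Lemma \ref{th:ell1mainest} (Lemma 7.3 of \cite{NLS09}).

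The genuine gap is in part (2). You correctly identify the weighted kernel $K(x,y)=|\Phi_n(x',x)|^{1/2}|\Phi_n(x',y)|^{1/2}\Phi_n(x,y)$, but the plain Schur test combined with ``$\ell_2$-Cauchy--Schwarz and Young-type inequalities'' that you invoke does not close: the row sum $\sum_y|\Phi_n(x',y)|^{1/2}|\Phi_n(x,y)|$ cannot be paired by H\"{o}lder or Cauchy--Schwarz without producing $\bigl(\sum_y|\Phi_n(x',y)|\bigr)^{1/2}$ or $\sup_x\sum_y|\Phi_n(x,y)|$, i.e.\ exactly the $\ell_1$-type quantities that $\ell_2$-clustering does not control (already for $n=1$ in the Gaussian setting of Section \ref{sec:Gaussian}, $\Phi_1$ is convolution by $F_2\in\ell_2$ and need not even define a bounded operator on $\ell_2$). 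The missing idea is to use the weight once more \emph{inside} the Schur test: with weight $p(x)=|\Phi_n(x',x)|^{1/2}$ one gets $p(x)^{-1}\sum_y|K(x,y)|\,p(y)=\sum_y|\Phi_n(x,y)|\,|\Phi_n(x',y)|\le\bigl(\sup_z\norm{\Phi_n(z,\cdot)}_{\ell_2}\bigr)^2$, which involves only two $\ell_2$-rows of $\Phi_n$ and is then controlled by Lemma \ref{th:ell1mainest} with $p=2$. Equivalently, the paper avoids duality altogether and bootstraps with the special test function $f(x)=\cf(x\in F)\,|\Phi_n(y,x)|\,G(x)^*$ over finite sets $F$, so that the weighted sum $\Lambda[f]=\sum_{x\in F}|G(x)|^2|\Phi_n(y,x)|$ appears on both sides of the Schwarz estimate and $\sqrt{\Lambda[f]}\le c_n'\sqrt{\E[|Y|^2]}$ follows before letting $F\uparrow Z^n$. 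Without one of these devices your sketch stalls precisely at the step that distinguishes the $p=2$ case from the $p=1$ case.
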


The key argument in the proof uses Wick polynomial representation of the above cumulants.  Namely, a direct consequence
of the truncated moments-to-cumulants formula given in Proposition \ref{th:W_mult_prop} in the Appendix, is that 
\begin{align}\label{eq:mainWick}
 \kappa[X,\phi(x_1),\ldots,\phi(x_n)] =  \E[X\w{\phi(x_1)\phi(x_2)\cdots\phi(x_n)}] = \E[\w{X}\w{\phi(x_1)\phi(x_2)\cdots\phi(x_n)}]\, .
\end{align}
The Proposition can be applied here since now $\E[|X|\prod_{i=1}^n |\phi(x_i)|]<\infty$ by the Schwarz inequality
estimate $\E[|X|\prod_{i=1}^n |\phi(x_i)|]^2\le \E[|X|^2]\E[\prod_{i=1}^n |\phi(x_i)|^2]$ where the first factor is finite since $X\in L^2(\mu)$, and the second factor is finite since $\phi$ is assumed to be $\ell_p$-clustering up to order $2 n$.  

Applying Schwarz inequality in (\ref{eq:mainWick}) yields a bound
\begin{align}
 \left|\kappa[X,\phi(x_1),\ldots,\phi(x_n)]\right|^2 \le \E[|{:}X{:}|^2] \E[|{:}\phi(x_1)\phi(x_2)\cdots\phi(x_n){:}|^2]
  = \cov(X^*,X) \Phi_n(x,x) \, .
\end{align}
Hence, the theorem is obviously true if $\Phi_n(x,x)$ decreases sufficiently rapidly with ``increasing'' $x$.  However, 
this is typically too restrictive: since $\Phi_1(x,x) = \E[|{:}\phi(x){:}|^2] = \var(\phi(x))$, this would require that the field $\phi$ becomes asymptotically deterministic.  The proof below combines suitably chosen test functions with the above Schwarz estimate and results in bounds which only require
summability of $\Phi_n(x',x)$ in $x$ for a fixed $x'$.  Such summability is guaranteed by the $\ell_p$-clustering of the field, and the rest of the proof consists of controlling the combinatorial factors which relate these two concepts together, cf.\ Lemma \ref{th:ell1mainest}.

Let us stress that the above result is typically \defem{not} true if \defem{moments} are used there 
instead of cumulants.  The above Schwarz inequality estimates would be straightforward for moments; in fact, such a Schwarz estimate 
was a key method in \cite{NLS09} to separate time-evolved fields from their time-zero counterparts in products of these fields.  However, the functions 
resulting from such Schwarz estimates are of the type $\E[\prod_{i=1}^n |\phi(x_i)|^2]$ and for these to be summable in $x$ the field not only has to become asymptotically deterministic, but it has to even vanish.  Cumulants of $\ell_p$-clustering fields would, on the other hand, be summable, 
but there is no obvious way of generalizing the Schwarz inequality bounds for cumulants.  The missing ingredient is here 
provided by the Wick polynomial representation (\ref{eq:mainWick}).

\begin{proof}
There is a natural Hilbert space structure associated with correlations of the present type.  We begin with test-functions $f:Z^{n}\to \C$
which have a \defem{finite} support, and define for them a (semi-)norm by the formula
\begin{align}
 & \norm{f}_{\phi,n}^2 := \E\!\left[
 \left|\smash[b]{\sum_{x\in Z^n}} f(x) \w{\phi(x)^{J_n}}\right|^2\right] 
 \quad = \sum_{x',x\in Z^n} f(x')^* f(x) \Phi_n(x',x)\, ,\nonumber \\
 & \Phi_n(x',x) := \E\!\left[\w{\phi^*(x')^{J'_n}} \w{\phi(x)^{J_n}}\right] \, ,
\end{align}
where $J_n=\set{1,2,\ldots,n}=J'_n$, and thus we have $\phi(x)^{J_n}:=\phi(x_1)\phi(x_2)\cdots \phi(x_n)$,
$\phi^*(x')^{J'_n}:=\phi(x'_1)^*\phi(x'_2)^*\cdots \phi(x'_n)^*$.
For the definition, we do not yet need any summability properties of the field $\phi$, 
it suffices that all the expectations in $\Phi_n(x',x)$ are well-defined for all $x',x$.  By the truncated moment-to-cumulants expansion of 
Wick polynomials, as given in Proposition \ref{th:W_mult_prop} in the Appendix, we have here
\begin{align}\label{eq:Phitokappas}
 \Phi_n(x',x) = \sum_{\pi \in \mathcal{P}(J'_n+ J_n)} \prod_{S \in \pi} \left(
  \kappa[\phi^*(x')_{A'},\phi(x)_{A}]
   \cf(A'\neq \emptyset, A\neq \emptyset)\right)_{A'=S|J'_n, A=S|J_n}\, ,
\end{align}
where the notation $S|J_n$ refers to the subsequence composed out of the indices belonging to $J_n$
in the cluster $S$ of the partition $\pi$ of $J'_n+J_n$.\footnote{If one has distinct labels in $J'_n$ and $J_n$, achievable always by relabelling of one of the sets, one can safely take here $J'_n+ J_n=J'_n\cup J_n$, $\mathcal{P}(J'_n+ J_n)$
equal to the ordinary partitions of the set $J'_n\cup J_n$, and also $S|J_n = S\cap J_N$.  However, such relabellings lead to unnecessarily clumsy notations in the present case, and we have opted to use the above notations from \cite{LM15}.}
The additional restrictions $A',A\neq \emptyset$ 
in the product arise from the fact that if either of them is violated, then the corresponding cluster $S$ is contained
entirely in either $J'_n$ or $J_n$, and vice versa. 
The partitions containing such a cluster are precisely those which are missing from the moments to cumulants formula by the Wick polynomial construction.
Therefore, $\Phi_n$ is finite, as soon as all cumulants up to order $2 n$ are finite.  On the other hand, this 
is already guaranteed by the assumed $\ell_p$-clustering of the field $\phi$.
For notational simplicity, let us drop the name of the field $\phi$ from the norm $\norm{f}_{\phi,n}$.

The norm can be associated with a scalar product using the polarization identity, and  we can then use it to define a Hilbert space
$\mathcal{H}_n$ by completion and dividing out the functions with zero norm, if the above formula gives only a semi-norm.
The elements of $\mathcal{H}_n$ are thus functions $f:Z^{n}\to \C$ with $\norm{f}<\infty$
(or their equivalence classes  in the semi-norm case when every $f$ and $g$ with $\norm{f-g}=0$ 
needs to be identified).  However, since we do not use these Hilbert spaces directly, let us skip the details of the construction.

We begin with joint correlations of the type $G(x):=\E[Y\w{\phi(x)^{J_n}}]$ where $Y\in L^2(\mu)$ is a random variable.
Here $G(x)$ is well defined due to the Schwarz inequality estimate $\E[|Y||{:}\phi(x)^{J_n}{:}|]^2\le \E\!\left[|Y|^2\right] \Phi_n(x,x)$.
If  $f:Z^{n}\to \C$ has a finite support, we define
\begin{align}
\Lambda[f] := \sum_{x\in Z^n} G(x) f(x) =
\E\!\left[Y \sum_{x\in Z^n} \w{\phi(x)^{J_n}} f(x)\right]  \, .
\end{align}
Applying the Schwarz inequality as above yields an upper bound
\begin{align}\label{eq:Schfest}
|\Lambda[f]|^2\le \E\!\left[|Y|^2\right]\E\!\left[
 \left|\smash[b]{\sum_{x\in Z^n}} f(x) \w{\phi(x)^{J_n}}\right|^2\right]
 = \E\!\left[|Y|^2\right] \norm{f}_n^2\, .
\end{align}

Here, by the definition of the norm, we obtain an unweighted $\ell_2$-estimate by using H\"{o}lder's inequality as follows
\begin{align}
&\norm{f}_{n}^2\le 
\sum_{x',x\in Z^n} |f(x')| |f(x)| |\Phi_n(x',x)| 
\le \sqrt{\sum_{x',x\in Z^n} |f(x')|^2  |\Phi_n(x',x)|}
\sqrt{\sum_{x',x\in Z^n} |f(x)|^2  |\Phi_n(x',x)|}\nonumber \\
& \quad \le \sum_{x\in Z^n} |f(x)|^2 
\sup_{x'\in Z^n} \sum_{x\in Z^n} |\Phi_n(x',x)|
\, ,
\end{align}
where we have used the obvious symmetry property $\Phi_n(x',x)^*=\Phi_n(x,x')$.
As shown below, in Lemma \ref{th:ell1mainest}, $\ell_1$-clustering of the field $\phi$ in fact
implies that there is $c_n<\infty$ such that $\sup_{x'\in Z^n}\sum_{x\in Z^n} |\Phi_n(x',x)|\le c_n$ (the explicit dependence of $c_n$ on the clustering norms is given in the Lemma).  
Hence, we can conclude that
$|\Lambda[f]|\le \sqrt{c_n \E\!\left[|Y|^2\right]}\, \norm{f}_{\ell_2}$.  Thus, thanks to the Riesz representation theorem, $\Lambda$ can be extended into a unique functional belonging to the dual of the Hilbert space $\ell_2(Z^n)$, and hence there
is a vector $\Psi\in\ell_2(Z^n)$ such that $\Lambda[f] = \sum_{x \in Z^n} \Psi(x)^* f(x)$ and $\norm{\Psi}_{\ell_2} \le \sqrt{c_n\E\!\left[|Y|^2\right]} $.
Then necessarily $G(x)=\Psi(x)^*$ for all $x$, and thus $G\in\ell_2(Z^n)$ as well, with a bound
\begin{align}\label{eq:firstbound}
\sqrt{\sum_{x\in Z^n} |G(x)|^2} \le \sqrt{c_n \E\!\left[|Y|^2\right]}\, .
\end{align}
If $Y=\w{X}$, we have $G(x)=\E[\w{X}\w{\phi(x)^{J_n}}]= \kappa[X,\phi(x_1),\ldots,\phi(x_n)]$ as explained in (\ref{eq:mainWick}), and
also $\E\!\left[|Y|^2\right]=\E\!\left[\w{X^*}\w{X}\right]=\kappa[X^*,X]=\cov(X^*,X)$.  Hence, (\ref{eq:firstbound}) implies
the bound stated in the first item.

For the weighted result, we apply (\ref{eq:Schfest}) for specially constructed test functions $f$.  
Let $F$ be any finite subset of $Z^n$ and choose an arbitrary point $y\in Z^n$.
Then $f(x)=\cf(x{\,\in\,}F) |\Phi_n(y,x)| G(x)^*$ has finite support and 
\begin{align}
 \Lambda[f] =  \sum_{x\in F} |G(x)|^2 |\Phi_n(y,x)| \le \sqrt{\E\!\left[|Y|^2\right]} \norm{f}_n <\infty\, .
\end{align}
On the other hand, we obtain the following estimate for $\norm{f}_n$
\begin{align}
&\norm{f}_{n}^2=  
\sum_{x',x\in F}  G(x)^* G(x')|\Phi_n(y,x')| |\Phi_n(y,x)| \Phi_n(x',x) \nonumber \\
&\quad \le \sqrt{
\sum_{x',x\in F} |G(x)|^2 |\Phi_n(y,x)| |\Phi_n(y,x')| |\Phi_n(x',x)|}
\sqrt{\sum_{x',x\in F} |G(x')|^2 |\Phi_n(y,x')| |\Phi_n(y,x)| |\Phi_n(x',x)|}\nonumber \\
& \quad\le \sum_{x\in F} |G(x)|^2 |\Phi_n(y,x)|
\left(\sup_{x'\in Z^n}
\sqrt{\sum_{x\in F} |\Phi_n(x',x)|^2}\right)^2
\, ,
\end{align}
where we have used $\Phi_n(x',x)^*=\Phi_n(x,x')$ and the Schwarz inequality in the last estimate.

As shown below, in Lemma \ref{th:ell1mainest}, $\ell_2$-clustering of the field $\phi$ 
implies $\sqrt{\sum_{x\in Z^n} |\Phi_n(x',x)|^2}\le c'_n <\infty$
where the explicit dependence of $c'_n$ on the clustering norms is given in the Lemma.  Therefore,
$\Lambda[f] \le c'_n \sqrt{\E\!\left[|Y|^2\right]} \sqrt{\Lambda[f]}$.  Since $0\le \Lambda[f]<\infty$ for any subset $F$,
we can conclude that the estimate $\sqrt{\Lambda[f]} \le c'_n \sqrt{\E\!\left[|Y|^2\right]}$ also holds.  
Thus by using subsets $F=F_R$, which are constructed by choosing the first $R$ elements from a fixed enumeration of $Z^n$,
and then taking $R\to \infty$, we obtain that
\begin{align}
\sqrt{\sum_{x\in Z^n} |G(x)|^2|\Phi_n(y,x)|} \le c'_n \sqrt{\E\!\left[|Y|^2\right]}< \infty\, ,
\end{align}
for all $y\in Z^n$.  This implies the statement in the second item.
 \end{proof}

\begin{lemma}\label{th:ell1mainest}
Suppose that the field $\phi$ is closed under complex conjugation and $\ell_p$-clustering up to order $2 n$, for some $p\in [1,\infty]$ and $n\ge 1$.  
Then, for any $x'\in Z^n$,
\begin{align}\label{eq:Phinnormest}
\norm{\Phi_n(x',\cdot)}_{\ell_p}
\le \sum_{\pi \in \mathcal{P}(J_{2 n})} \prod_{S \in \pi}   \norm{\phi}^{(|S|)}_p 
\le
M_{2n}(\phi;p)^{2 n}\rme^{2 n} (2 n)! \, ,
\end{align}
where $J_{2 n} = \set{1,2,\ldots,2 n}$ and 
$\mathcal{P}(J_{2 n})$ denotes the collection of its partitions.
\end{lemma}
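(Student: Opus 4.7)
The proof proceeds in three stages: a Wick expansion combined with Minkowski's inequality, a per-cluster bound via the $\ell_p$-clustering norm, and a combinatorial estimate on the sum over partitions.

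First, I would invoke the Wick polynomial expansion (\ref{eq:Phitokappas}) of $\Phi_n(x',x)$ as a sum over partitions $\pi$ of $J'_n+J_n$ whose clusters $S$ satisfy $A'_S := S|J'_n \neq \emptyset$ and $A_S := S|J_n \neq \emptyset$, of products of cumulants $\kappa[\phi^*(x')_{A'_S},\phi(x)_{A_S}]$. Applying Minkowski's inequality in $\ell_p(Z^n)$ moves the norm inside the sum over $\pi$. Since $\{A_S\}_{S\in\pi}$ is a partition of $J_n$, the $x$-dependence across clusters is in disjoint variables and the resulting inner $\ell_p$ norm factorizes as $\prod_{S\in\pi} \bigl(\sum_{x_{A_S}} |\kappa[\phi^*(x')_{A'_S},\phi(x)_{A_S}]|^p\bigr)^{1/p}$.

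The core step is to bound each single-cluster factor by $\norm{\phi}^{(|S|)}_p$. Closure of $\phi$ under complex conjugation rewrites the cumulant as one involving only $\phi$ at $|S|$ points, and by symmetry of cumulants under permutation the supremum in (\ref{eq:defclnorm}) may be taken at any chosen anchor position. Choosing the anchor from $A'_S$ (possible precisely because the Wick constraint forces $A'_S \neq \emptyset$) and observing that fixing the remaining $|A'_S|-1$ positions in $A'_S$ at their specified $x'$-values only restricts the sum, the inner sum is majorized by $(\norm{\phi}^{(|S|)}_p)^p$. Multiplying across clusters and enlarging the range of $\pi$ from the constrained partitions of $J'_n+J_n$ to all partitions of $J_{2n}$ (legitimate by nonnegativity of the summands) yields the first inequality in (\ref{eq:Phinnormest}).

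For the second inequality I would apply the definition (\ref{eq:defMN}) to obtain $\norm{\phi}^{(|S|)}_p \le |S|!\,M_{2n}(\phi;p)^{|S|}$ for $|S|\le 2n$; since $\sum_{S\in\pi}|S|=2n$ the factor $M_{2n}(\phi;p)^{2n}$ pulls out, leaving a purely combinatorial sum $\sum_{\pi\in\mathcal{P}(J_{2n})}\prod_{S\in\pi}|S|!$. Grouping partitions by the number of blocks $k$ and enumerating ordered block-size compositions identifies this sum with $(2n)!\sum_{k=1}^{2n}\binom{2n-1}{k-1}/k!$, which is bounded by $(2n)!\,\rme\cdot 2^{2n-1} \le (2n)!\,\rme^{2n}$, using $\binom{2n-1}{k-1} \le 2^{2n-1}$, $\sum_k 1/k! \le \rme$, and $2 \le \rme$.

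The main obstacle is the per-cluster bound in the middle paragraph: it hinges on the constraint $A'_S \neq \emptyset$, which holds precisely because we expand $\Phi_n$ via Wick polynomials rather than raw moments. An unrestricted moment expansion would allow clusters lying entirely within $J_n$, forcing the summation to run over \emph{all} arguments of a cumulant and breaking the uniform-in-$x'$ bound.
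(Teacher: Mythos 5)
Your proposal is correct and follows essentially the same route as the paper's proof: Minkowski's inequality over the partitions in the Wick expansion (\ref{eq:Phitokappas}), factorization of the $\ell_p$-sum over the disjoint blocks $A_S$ of $J_n$, anchoring each cluster cumulant at a point of the nonempty $A'_S$ (using closure under conjugation and permutation invariance) and enlarging the sum to get the factor $\norm{\phi}^{(|S|)}_p$, then the bound via $M_{2n}(\phi;p)$ and the combinatorial estimate $\sum_{\pi}\prod_{S\in\pi}|S|!\le(2n)!\,\rme^{2n}$. The only differences are minor: you prove that combinatorial estimate directly (correctly, via $(2n)!\sum_{k}\binom{2n-1}{k-1}/k!\le (2n)!\,\rme\,2^{2n-1}$) where the paper cites Lemma 7.3 of \cite{NLS09}, and your sum-and-$p$-th-power notation implicitly assumes $p<\infty$, the $p=\infty$ case being the same trivial adaptation the paper dispatches in one line.
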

\begin{proof}
Let us consider some fixed $x'\in Z^n$.
We apply the Minkowski inequality to (\ref{eq:Phitokappas}),
as a function of $x$, and conclude that 
\begin{align} 
\norm{\Phi_n(x',\cdot)}_{\ell_p}
\leq 
\sum_{\pi \in \mathcal{P}(J'_n+ J_n)} \norm{F(x',\cdot\,;\pi)}_{\ell_p}
\end{align}
where 
\begin{align}\label{eq:sumPhin}
F(x',x;\pi) :=
\prod_{S \in \pi} 
\left(
  |\kappa[\phi^*(x')_{A'},\phi(x)_{A}]|
\cf(A'\neq \emptyset, A\neq \emptyset)\right)_{A'=S|J'_n, A=S|J_n}
\, .
\end{align}

Let us first consider the case $p<\infty$.  For any $\pi \in \mathcal{P}(J'_n+ J_n)$ yielding a nonzero 
$F$,
the restrictions of its clusters with $J_n$, $A=S|J_n$ in the above formula, form a partition of $J_n$.
Let us denote this partition by $\pi_2$.
Hence, we can use this partition to reorder the summation over $x \in Z^n$ into iterative summation over $x_A \in Z^A$ for $A\in \pi_2$.
Applied to (\ref{eq:sumPhin}) this yields
\begin{align}
\sum_{x \in Z^n} |F(x',x;\pi)|^p
=
\prod_{S \in \pi}
  \biggl(
  \cf(A'\neq \emptyset, A\neq \emptyset)
  \sum_{x_A \in Z^A}   
  \left|\kappa[\phi^*(x')_{A'},\phi(x)_{A}]\right|^p\biggr)_{\! A'=S|J'_n,\, A=S|J_n} \, .
\end{align}
Since the field $\phi$ is closed under complex conjugation, for each $S\in \pi$ the 
sum over $x_A$ is equal to  $\sum_{x \in Z^{A}}\left|\kappa[\phi(y')_{J_{|A'|}},\phi(x)_A]\right|^p$
where $y'=((x'_i)_*)_{i\in A'}$.
As $A'\ne \emptyset$, we may choose an element $j\in A'$.  We then denote $x_0=(x'_j)_*$.
and estimate the sum with an $ \ell_p $-clustering norm as follows
\begin{align} 
 & \sum_{x \in Z^{A}}\left|\kappa[\phi^*(x')_{A'},\phi(x)_{A}]\right|^p \le 
  \sum_{y \in Z^{|A'|-1}} \sum_{x \in Z^{A}}\left|\kappa[\phi(x_0),\phi(y)_{J_{|A'|-1}},\phi(x)_A]\right|^p \le 
  \left(\norm{\phi}^{(|A|+|A'|)}_p \right)^p\, .
\end{align}
Since $|A'|+|A|=|S|$, we can conclude that, if $p<\infty$,
\begin{align} 
\norm{F(x',\cdot\,;\pi)}_{\ell_p} \le \prod_{S \in \pi}
\norm{\phi}^{(|S|)}_p\, .
\end{align}
The corresponding estimate for $p=\infty$ is a straightforward consequence of $|\kappa[\phi^*(x')_{A'},\phi(x)_{A}]|\le  \norm{\phi}^{(|A'|+|A|)}_\infty$ which was discussed in Section \ref{sec:setting} after Eq.\ (\ref{eq:defclnorm}).

Therefore, we can now conclude that the first inequality in (\ref{eq:Phinnormest}) holds.
By the definition in (\ref{eq:defMN}), we can then apply an upper bound 
\begin{align*}
& \norm{\phi}^{(m)}_p \le m! M_m(\phi;p)^m \le m! M_{2 n}(\phi;p)^m
\, ,
\end{align*}
for any $m \le 2 n$.  If $\pi \in \mathcal{P}(J'_n+ J_n)$, we have $|S|\le 2 n$ for any $S\in \pi$, and thus
\begin{align} 
\prod_{S \in \pi} \norm{\phi}^{(|S|)}_p \le M_{2 n}(\phi;p)^{\sum_{S \in \pi}|S|} \prod_{S \in \pi} |S|! 
= M_{2 n}(\phi;p)^{2 n} \prod_{S \in \pi} |S|! 
\, .
\end{align}
A combinatorial estimate shows that 
\begin{align}\label{eq:comb_est}
\sum_{\pi \in \mathcal{P}(J_{2 n})} \prod_{S \in \pi} |S|! \le (2n)! \rme^{2 n}
\end{align}
(a proof of the inequality is available for instance in the proof of Lemma 7.3 in \cite{NLS09}).
Therefore, we have proven also the second inequality in (\ref{eq:Phinnormest}), concluding the proof of the Lemma.
\end{proof}

The following theorem contains the already stated Theorem \ref{th_unresult} in the item 1.  The remarks after the Theorem at the end of Section \ref{sec:setting} hold also in this case.
In particular, it is obviously valid for 
any $\ell_q$-clustering field $\psi$, as long as $1\le q\le \infty$.

\begin{theorem}\label{th_weigheted}
 Consider two random lattice fields $\phi(x)$ and $\psi(x)$, $x\in Z$ for a countable $Z$, defined on the same probability space 
 $(\Omega,\mathcal{M},\mu)$ and each closed under complex conjugation.  Suppose that $\phi$ is $\ell_p$-clustering and $\psi$ is $\ell_\infty$-clustering
 up to order $2 N$ for some $N\in \N_+$.  Let $M_N$ be defined as in (\ref{eq:defMN}).
 Then their joint cumulants satisfy the following $\ell_2$-estimates for any $n,m\in \N_+$ for which $n,m\le N$:
\begin{enumerate}
 \item If $p=1$, we have a bound
 \begin{align}
  & \sup_{x'\in Z^{m}} 
  \biggl[\sum_{x \in Z^n} \big\vert \kappa[\psi(x'_1),
 \ldots,\psi(x'_m),\phi(x_1),\ldots,\phi(x_n)] \big\vert^2 \biggr]^{1/2}
 \leq (\mathfrak{M}_{m,n} \gamma^m)^{n+m} (n+m)! \, 
\end{align}
where $\mathfrak{M}_{m,n} :=  \max(M_{2m}(\psi; \infty), M_{2n}(\phi; 1)) $ and $ \gamma=2 \rme$.
 \item If $p=2$, we have a bound
 \begin{align}
  & \sup_{x'\in Z^{m},y\in Z^{n}} 
  \biggl[\sum_{x \in Z^n} |\Phi_n(y,x)| \big\vert \kappa[\psi(x'_1),
 \ldots,\psi(x'_m),\phi(x_1),\ldots,\phi(x_n)] \big\vert^2 \biggr]^{1/2}
 \nonumber \\  & \qquad 
 \leq (\mathfrak{M}_{m,n} \gamma^{m})^{2(n+m)} ((n+m)!)^2
\end{align}
where $\Phi_n(y,x):=\E\!\left[\w{\phi(y_1)^*\phi(y_2)^*\cdots\phi(y_n)^*}
\w{\phi(x_1)\phi(x_2)\cdots \phi(x_n)}\right]$, and we set
$\mathfrak{M}_{m,n} :=  \max(M_{2m}(\psi; \infty), M_{2n}(\phi; 2)) $ and $ \gamma=2 \rme$.
\end{enumerate}
\end{theorem}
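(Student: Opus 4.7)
The strategy is to reduce the two-field problem to the single-field setting of Theorem \ref{th_hom} by taking $X$ to be a Wick polynomial of $\psi$'s. For fixed $x'\in Z^m$, I would set $X := \w{\psi(x'_1)\psi(x'_2)\cdots\psi(x'_m)}$. Then $X$ has zero mean, so $\cov(X^*,X)=\E[|X|^2]=\Phi_m^\psi(x',x')$, which Lemma \ref{th:ell1mainest} (applied to $\psi$ with $p=\infty$) bounds by $M_{2m}(\psi;\infty)^{2m}\rme^{2m}(2m)!$. The first equality in (\ref{eq:mainWick}) then gives $\kappa[X,\phi(x_1),\ldots,\phi(x_n)]=\E[\w{\psi(x')^{J_m}}\w{\phi(x)^{J_n}}]$, so applying Theorem \ref{th_hom}(1) for $p=1$ (respectively item (2) for $p=2$) with this $X$ provides the claimed (weighted) $\ell_2$ bound on the right-hand side. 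However, that right-hand side is \emph{not} the target joint cumulant when $m\geq 2$.

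To extract the joint cumulant, I would invoke the truncated moments-to-cumulants formula of Proposition \ref{th:W_mult_prop}:
$$\E\!\left[\w{\psi(x')^{J_m}}\w{\phi(x)^{J_n}}\right] = \sum_{\pi\in\mathcal{P}(J_m+J_n)}\prod_{S\in\pi}\left(\kappa[\psi(x')_{A'},\phi(x)_A]\cf(A'\neq\emptyset,A\neq\emptyset)\right)_{A'=S|J_m,\,A=S|J_n}.$$
The trivial (single-block) partition contributes exactly $\kappa[\psi(x'_1),\ldots,\psi(x'_m),\phi(x_1),\ldots,\phi(x_n)]$; isolating it expresses the target cumulant as $\E[\w{\psi^{J_m}}\w{\phi^{J_n}}]$ minus a sum of ``disconnected'' products indexed by partitions $\pi$ with at least two blocks, each block still straddling both sides.

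I would then close by induction on $m+n$. Taking the (weighted) $\ell_2$-norm in $x$ of the resulting identity and applying Minkowski's inequality, the $\E[\w{\psi}\w{\phi}]$ term is controlled by the first paragraph; for each non-trivial $\pi$, the blocks $\{A_i\}$ partition $J_n$ into disjoint groups, so $\sum_x \prod_i |\kappa[\psi(x')_{A'_i},\phi(x)_{A_i}]|^2 = \prod_i \sum_{x_{A_i}} |\kappa[\psi(x')_{A'_i},\phi(x)_{A_i}]|^2$, and each factor is bounded by the induction hypothesis at $(|A'_i|,|A_i|)$, strictly smaller than $(m,n)$. In the weighted ($p=2$) case, the weight $|\Phi_n(y,x)|$ does not factorise over blocks, so an additional Cauchy--Schwarz step combined with the $\ell_2$-bound $\|\Phi_n(y,\cdot)\|_{\ell_2}\leq M_{2n}(\phi;2)^{2n}\rme^{2n}(2n)!$ from Lemma \ref{th:ell1mainest} is needed to distribute the weight across the block factors. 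Summing over admissible partitions using the combinatorial estimate $\sum_{\pi\in\mathcal{P}(J_{m+n})}\prod_{S\in\pi}|S|!\leq(m+n)!\rme^{m+n}$ from (\ref{eq:comb_est}) then yields the stated bound.

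The main obstacle will be the combinatorial bookkeeping in this induction: one must verify that the inductive ansatz $(\mathfrak{M}_{m,n}\gamma^m)^{n+m}(n+m)!$, with the exponent $\gamma^m$ rather than the bare $\gamma$ of the $m=1$ case handled directly by Theorem \ref{th_hom}, carries enough slack to absorb both the partition sum and the sub-factorials $|S_i|!$ generated by the inductive hypothesis at each block. The exponent $m$ in $\gamma^m$ encodes the branching from having $m$ Wick-factor $\psi$'s: across any partition one has $\sum_i|A'_i|=m$, $\sum_i|A_i|=n$, and $\sum_i|A'_i|(|A'_i|+|A_i|)\leq m(m+n)$, which matches exactly the target exponent $m(n+m)$ of $\gamma$ in the stated bound.
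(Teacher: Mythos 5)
Your strategy coincides with the paper's: the same decomposition of the target cumulant, via Proposition \ref{th:W_mult_prop}, into the Wick-product expectation $\pazocal{P}(x',x)=\E[\w{\psi(x'_1)\cdots\psi(x'_m)}\w{\phi(x_1)\cdots\phi(x_n)}]$ minus the sum $\pazocal{Q}$ over nontrivial partitions whose blocks straddle both index sets; the same treatment of the main term by Theorem \ref{th_hom} with $X=\w{\psi(x'_1)\cdots\psi(x'_m)}$ and $\cov(X^*,X)=\Psi_m(x',x')$ bounded through Lemma \ref{th:ell1mainest} at $p=\infty$; and the same Minkowski-plus-blockwise-factorization induction for $\pazocal{Q}$. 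However, the step you yourself flag as ``the main obstacle'' is a genuine gap, and your bookkeeping as written does not close it. You bound the accumulated $\gamma$-exponent by $\sum_i|A'_i|(|A'_i|+|A_i|)\le m(m+n)$ and remark that this ``matches exactly'' the target exponent; but then the $\pazocal{Q}$-contribution alone is already of size $\mathfrak{M}_{m,n}^{n+m}\gamma^{m(n+m)}$ multiplied by the factor $\rme^{n+m}(n+m)!$ coming from \eqref{eq:comb_est}, which exceeds the claimed bound $(\mathfrak{M}_{m,n}\gamma^m)^{n+m}(n+m)!$ before the $\pazocal{P}$-term is even added. The missing observation is that in any admissible partition with $|\pi|>1$ every block has $|A'_i|\le m-1$ (and $|A_i|\le n-1$), so the inductive input only produces $\gamma^{(m-1)(n+m)}$; the leftover factor $\gamma^{-(n+m)}=(2\rme)^{-(n+m)}$ is precisely what absorbs both the $\rme^{n+m}$ from the partition estimate and the main-term contribution $(2\rme\,\mathfrak{M}_{m,n})^{n+m}(n+m)!$, once $m\ge 2$ and $\gamma=2\rme$; the case $m=1$, where $\pazocal{Q}$ vanishes, follows directly from Theorem \ref{th_hom} together with $(2n)!\le 2^{2n}(n!)^2$. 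With that refinement your induction (whether on $m$, as in the paper, or on $m+n$) closes.

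The second weak point is your treatment of the weighted case $p=2$. The Cauchy--Schwarz step you propose, pairing the weight against $\norm{\Phi_n(y,\cdot)}_{\ell_2}$, removes the weight from the cumulant factors and leaves you with unweighted sums $\sum_x\prod_S|\kappa_S|^2$ (or fourth-power sums), and for a field that is merely $\ell_2$-clustering these are exactly the quantities that are not controlled by any hypothesis or by the induction --- this is the very reason item 2 is stated in weighted form. The paper instead keeps the weighted counting measure throughout: it repeats the $\ell_1$ argument verbatim with the flat $\ell_2$-norm replaced by the weighted one, applies item 2 of Theorem \ref{th_hom} to the $\pazocal{P}$-term, and copes with the squared factorials produced by the squared inductive bound via $\prod_{S\in\pi}|S|!\le(n+m)!$ before using \eqref{eq:comb_est}. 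If you wish to keep your route you must explain how to avoid those unweighted sums; as described, this part of the argument would fail.
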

\begin{proof}
We will proceed by induction over $m$.  Let us recall the above definition of $\Phi_n$ and define analogously
$\Psi_m(y',x'):=\E\!\left[\w{\psi(y'_1)^*\psi(y'_2)^*\cdots\psi(y'_m)^*}
\w{\psi(x'_1)\psi(x'_2)\cdots \psi(x'_m)}\right]$.  In particular, then we can apply Theorem \ref{th_hom}
with $X=\psi(x'_1)$.  By Lemma \ref{th:ell1mainest}, then $\E[|{:}X{:}|^2]=\Psi_1(x_1',x_1')\le M_{2}(\psi;\infty)^{2}\rme^2 \, 2!$,
and thus, say for $ \gamma=2 \rme$, 
both items 1 and 2 can be seen to hold for $m=1$ and any $n\le N$ thanks to Theorem \ref{th_hom} and the estimate
$(2n)!\le ((2n)!!)^2= 2^{2n}(n!)^2$.

As an induction hypothesis, we consider some $1<m\le N$ and assume that the thesis holds for values up to $m-1$ with any $n\le N$.
We also give the details only for the first $\ell_1$-clustering case, i.e., with $p=1$.  

Let us decompose the cumulant using Proposition \ref{th:W_mult_prop}.  Namely, consider 
\begin{align*}
\pazocal{P}(x', x) &:= \E[\w{\psi(x_1') \cdots \psi(x'_m)} \,\w{\phi(x_1) \cdots \phi(x_n)}] \,,
\end{align*}
for which
\begin{align*}
\kappa[\psi(x_1'), \ldots, \psi(x'_m), \phi(x_1), \ldots, \phi(x_n)] = 
\pazocal{P}(x', x)-\pazocal{Q}(x', x)\, ,
\end{align*}
with
\begin{align*}
\pazocal{Q}(x', x) &:= \sum_{\pi \in \mathcal{P}(J'_m+ J_n)} \cf(|\pi| > 1) \prod_{S \in \pi} \left(
  \kappa[\psi(x')_{A'},\phi(x)_{A}]
   \cf(A'\neq \emptyset, A\neq \emptyset)\right)_{A'=S|J'_m, A=S|J_n}\, .
\end{align*}
Then we can conclude from the Minkowski inequality that 
\begin{align}\label{K_decomp}
  \biggl[\sum_{x \in Z^n} \big\vert \kappa[\psi(x'_1),
 \ldots,\psi(x'_m),\phi(x_1),\ldots,\phi(x_n)] \big\vert^2 \biggr]^{1/2}\le
\norm{\pazocal{P}(x', \cdot)}_{\ell_2}+ \norm{\pazocal{Q}(x', \cdot)}_{\ell_2}\, .
\end{align}

We now estimate $\norm{\pazocal{P}(x', \cdot)}_{\ell_2}$ using item 1 in  Theorem \ref{th_hom}
with $X=\w{\psi(x_1') \cdots \psi(x'_m)}$.  Clearly, then
\begin{align}
\pazocal{P}(x, x') = \E[\w{X} \w{\phi(x_1) \cdots \phi(x_n)} ] = \kappa[X,\phi(x_1), \ldots,\phi(x_n) ]\,,
\end{align}
and, by applying Theorem \ref{th_hom} and Lemma \ref{th:ell1mainest}
\begin{align}\label{eq:P_bound}
\norm{\pazocal{P}(x', \cdot)}_{\ell_2} \le \sqrt{\Psi_m(x',x')}  M_{2 n}(\phi;1)^n \rme^n \sqrt{(2 n)!} 
\le
M_{2 m}(\psi; \infty)^{m}  M_{2 n}(\phi;1)^{n} \rme^{n+m} \sqrt{(2m)! (2 n)!} \, .
\end{align}
Note that $ (2 n)! \leq 2^{2n} (n!)^2$ and $n!m! \leq (n+m)! $, so, recalling the definition of $ \mathfrak{M}_{m,n}$, we have
\begin{align}\label{eq:P_bound2}
\norm{\pazocal{P}(x', \cdot)}_{\ell_2} \le (\mathfrak{M}_{m,n} 2 \rme)^{n+m} (n+m)! \, .
\end{align}

To control the second term in \eqref{K_decomp}, we first use the Minkowski inequality to the sum over the partitions, as in the proof of Theorem  \ref{th_hom}, yielding 
\begin{align*}
& \norm{\pazocal{Q}(x',\cdot)}_{\ell_2}
\\ \nonumber & 
\leq \sum_{\pi \in \mathcal{P}(J'_m+ J_n)} \!\cf(|\pi| > 1) \prod_{S \in \pi} \left(
 \biggl[\sum_{x\in Z^A} | \kappa[\psi(x')_{A'},\phi(x)_{A}]|^2\biggr]^{1/2}
   \cf(A'\neq \emptyset, A\neq \emptyset)\right)_{A'=S|J'_m, A=S|J_n}\, .
\end{align*}
In the final expression all the cumulants $\kappa[\psi(x')_{A'},\phi(x)_{A}]$ are such that 
$|A'| < m$ and $|A|<n$.  Therefore, the induction hypothesis can be applied to estimate their $\ell_2$-norms:
\begin{align*}
& \biggl[\sum_{x\in Z^A} | \kappa[\psi(x')_{A'},\phi(x)_{A}]|^2\biggr]^{1/2} 
 \leq (\mathfrak{M}_{m',n'}\gamma^{m'})^{|S|} |S|!
|_{m'=\left|S|J'_m\right|, n'=\left|S|J_n\right|}\, .
\end{align*}
Note that $ \mathfrak{M}_{m,n}$ is non-decreasing in $m$, and $ m' \leq m-1$; hence,
\begin{align*}
& \norm{\pazocal{Q}(x', \cdot)}_{\ell_2} \leq \sum_{\pi \in \mathcal{P}(J'_m+ J_n)} \cf(|\pi| > 1) \prod_{S \in \pi} (\mathfrak{M}_{m',n'}\gamma^{m'})^{|S|} |S|!|_{m'=\left|S|J'_m\right|, n'=\left|S|J_n\right|} \\\nonumber
& \quad \leq  \sum_{\pi \in \mathcal{P}(J'_m+ J_n)}  \prod_{S \in \pi} (\mathfrak{M}_{m,n}\gamma^{m-1})^{|S|}|S|! 
=  (\mathfrak{M}_{m,n} \gamma^{m-1})^{m+n}\sum_{\pi \in \mathcal{P}(J'_m+ J_n)}  \prod_{S \in \pi} |S|! \nonumber \\
& \quad \leq (\mathfrak{M}_{m,n} \gamma^{m-1})^{m+n}\rme^{n+m}(n+m)! = (\mathfrak{M}_{m,n} \gamma^{m})^{m+n}(\rme/ \gamma)^{n+m}(n+m)!
\end{align*}
where in the last inequality we have used \eqref{eq:comb_est}. Collecting the two estimates together we have proven
\begin{align}
\norm{\pazocal{P}(x', \cdot)}_{\ell_2} + \norm{\pazocal{Q}(x', \cdot)}_{\ell_2} \leq  \mathfrak{M}^{n+m}_{m,n}(n+m)! [(2 \rme )^{n+m}+(\rme / \gamma)^{n+m} \gamma^{m(m+n)}]\,.
\end{align}
In order to close the induction, we need to choose $\gamma$ such that 
$$
(2 \rme )^{n+m}+(\rme / \gamma)^{n+m} \gamma^{m(m+n)} \leq \gamma^{m(m+n)}\,.
$$
Since $ m \geq 2$, it suffices to set, for instance, $\gamma = 2 \rme $.

The proof of the second item,
with $p=2$, is essentially the same: one only needs to replace the flat $\ell_2$-norm
by the above weighted $\ell_2$-norm containing the factor $|\Phi_n(y,x)|$ (which can also be understood as
integrals over the corresponding weighted counting measure over $Z$), and to apply item 2 in Theorem \ref{th_hom} instead of 
item 1 there. To reach the same combinatorial estimates, we can reduce the resulting second powers of $|S|!$ 
to sums over first powers via the bound $\prod_{S \in \pi} |S|! \le (n+m)!$ which is valid for any partition $\pi \in \mathcal{P}(J'_m+ J_n)$.
\end{proof}

\section{Discussion with an application to DNLS} 
\label{sec:discussion}

Suppose that for each $t\ge 0$ there is given $\psi_t(x,\sigma)$ which is a random field on $\Z^d\times I$ for some finite index set $I$.
Suppose also that all $\psi_t$ are identically distributed, according to an $\ell_1$-clustering measure;
such fields arise, for instance, from stochastic processes by choosing the initial data 
from a stationary measure which is $\ell_1$-clustering.  For such a system,
using $X=\psi_0(0,\sigma_0)$ in Theorem \ref{th_hom} implies
that any time-correlation function of the form
\begin{align}
 F_{t,n,\sigma_0}(x,\sigma) := \kappa[\psi_0(0,\sigma_0),\psi_t(x_1,\sigma_1),\ldots,\psi_t(x_{n-1},\sigma_{n-1})]
\end{align}
belongs to $\ell_2((\Z^d\times I)^{n-1})$ and \defem{its norm is uniformly bounded in $t$} by a constant
which depends only on the initial measure.

As an explicit example, let us come back to the discrete NLS evolution and its equilibrium
time-correlations, as discussed in the Introduction.  At the time of writing, we are not aware of a rigorous
definition of the infinite volume dynamics for an equilibrium measure of the DNLS.  However, DNLS evolution is
well-defined on any finite periodic lattice, and there is a range of hopping amplitudes and equilibrium parameters
for which the corresponding thermal Gibbs states are $\ell_1$-clustering, uniformly in the lattice size, as proven in \cite{abdesselam-2009}.
Therefore, it seems likely that there are harmonic couplings for which 
also the DNLS evolution equations on $\Z^d$ with initial data distributed according to a stationary measure
can be solved almost surely.  In addition, it should be possible to define the stationary measure so that it is
$\ell_1$-clustering and translation invariant.

For any such $\ell_1$-clustering stationary measure, we could then study the evolution of the functions $F_{t,n,\sigma_0}$ using the above results.
Referring to \cite{LM15} for details, for instance $f_t(x):=\kappa[\psi_0(0,-1),\psi_t(x_1,1)]=\E [\w{\psi_0(0,-1)}\psi_t(x_1,1)]$ would then satisfy an evolution equation
\begin{align}\label{eq:ftevoleq}
 \ci \partial_t f_t(x) = \sum_{y\in \Z^d} \alpha(x-y) f_t(y) + \lambda g_t(x)\, ,
\end{align}
where $g_t(x):=\E[\w{\psi_0(0,-1)}\psi_t(x,-1)\psi_t(x,1)\psi_t(x,1)]$.  Applying Proposition \ref{th:W_mult_prop}, 
$g_t$ can be represented in terms 
of the constant $\E[\psi_t(x,1)]$ and the functions
$F_{t,n,-1}$, with $n=2,3,4$.  Using the above estimates, it then follows
that there is a constant $C$ such that $\norm{g_t}_{\ell_2}\le C$ for all $t$.  
Taking a Fourier-transform of 
(\ref{eq:ftevoleq}) and solving it in Duhamel form implies that $\FT{f}_t=\mathcal{F}f_t$ satisfies 
\begin{align}
 \FT{f}_t(k) = \rme^{-\ci t \FT{\alpha}(k)} \FT{f}_0(k) -\ci \lambda \int_0^t \!\rmd s\, \rme^{-\ci (t-s) \FT{\alpha}(k)}
 \FT{g}_s(k)\, .
\end{align}
For stable harmonic interactions one needs to have $\FT{\alpha}(k)\ge 0$.  Therefore, for such systems we may conclude,
without any complicated analysis of oscillatory integrals or graph expansion of cumulants, that 
the harmonic evolution dominates the behavior of $f_t$ up to times of order $\lambda^{-1}$.  More precisely,
we find that the $\ell_2$-norm of the error is bounded by
\begin{align}\label{eq:ftbound}
\left\Vert f_t - \mathcal{F}^{-1}(\rme^{-\ci t \FT{\alpha}} \FT{f}_0)\right\Vert_{\ell_2} =
 \left\Vert \FT{f}_t - \rme^{-\ci t \FT{\alpha}} \FT{f}_0\right\Vert_{L^2(\T^d)} \le C t \lambda \, ,
\end{align}
for \defem{all} $t\ge 0$.

The above example perhaps does not appear very significant: after all, it is simply stating that the nonlinearity acts as a perturbation in $\ell_2$-norm with its ``natural'' strength, having an effect of order $\lambda t$ to the time-evolution.
Let us however stress that without the present a priori bounds there seems to be no other alternative 
to prove this than to resort to the heavy machinery of time-dependent perturbation theory with Feynman graph classification of oscillatory integrals and careful applications of momentum hierarchies, see \cite{NLS09} for a detailed example for DNLS. 

Another important property hidden in the bound (\ref{eq:ftbound}) is the fact that $\FT{f}_t(k)$ is a \defem{function}, and not a distribution.  This would \defem{not} be true in general if instead of cumulants we would have used moments to define $f_t$;
if $\E[\psi_0(0)]\ne 0$ and the initial state is translation invariant, 
already at $t=0$ the Fourier transform of $\E[\psi_0(0,-1) \psi_t(x,1)]$ is a distribution proportional to the Dirac delta
$\delta(k)$.  The fact that the cumulants produce functions, which are uniformly bounded in $\ell_2$, allows not only taking 
Fourier transforms but also simplifying the study of nonlinear terms in the hierarchies as products of distributions are
notoriously difficult to control rigorously.

On the mathematical side, it would be of interest to study more carefully the above combinatorial bounds.  
We do not claim that the above constants, or their dependence on the orders $n$ and $m$, should be optimal,
and there could be room for significant improvement there, possibly of importance in 
problems requiring the full infinite order cumulant hierarchy.  Also, it is not clear what are the 
optimal powers and weights for the summability of the correlations
as the clustering power $p$ of the field $\phi$ is varied. These questions could prove to be hard to resolve in 
the greatest generality, but we remain optimistic that already the present bounds suffice to control 
the time-evolution of cumulants in some of the above mentioned open transport problems.

\appendix

 \section{Cumulants and Wick polynomials}\label{appendix}

We consider a collection $y_j$, $j\in J$ where $J$ is some fixed nonempty index 
set, of real or complex random variables
on some probability space $(\Omega,\mathcal{M},\mu)$.
Then for any sequence of indices, $I=(i_1,i_2,\ldots,i_n)\in J^n$,
we use following shorthand notations
to label monomials of the above random variables:
\begin{align}\label{eq:defIpower}
 y^I = y_{i_1} y_{i_2}\cdots y_{i_n} = \prod_{k=1}^n y_{i_k} \, ,\qquad y^\emptyset := 1\quad \text{if}\quad I=\emptyset. \,
\end{align}

We introduce the collection $\mathcal{I}$ which consists of those finite subsets $A\subset 
\N\times J$ with the
property that if $(n,j), (n',j')\in A$ and $(n,j)\ne (n',j')$
then $n\ne n'$.  The  empty sequence is identified with 
$\emptyset\in \mathcal{I}$.
For nonempty sets, the natural number in the first component serves as a 
distinct label for each member in $A$.
As already explained in an earlier footnote, 
we consider sequences of indices and not sets of indices in order to avoid a more cumbersome notation due to the possible relabellings of the sets. 
For any $I\in \mathcal{I}$ we denote the corresponding moment by $ \E[y^I]$, and
the related cumulant by
\begin{align}
 & \kappa[y_I] = \kappa_\mu[y_I] = \kappa[y_{i_1}, y_{i_2}, \cdots{}, y_{i_n}]\, .
\end{align}
The corresponding Wick polynomial is denoted by
\begin{align}
\w{y^I} = \w{y^I}_\mu =\w{y_{i_1} y_{i_2} \cdots y_{i_n}}\, .
\end{align}
Both $\kappa[y^I]$ and $ \w{y^I}$ can be defined recursively if $I\in \mathcal{I}$ is such that $\E[|y^E|]<\infty$ for all 
$E\subset I$ (see \cite{LM15}).  Explicitly, it suffices to require that
\begin{align}\label{eq:defTJ}
  \w{y^{I}} = y^I - \sum_{E\subsetneq I} \E[y^{I\setminus E}]\,
  \w{y^E}\, ,
\end{align}
and, choosing some $x\in I$,
\begin{align}\label{eq:iterdefkappa}
\kappa[y_I] = 
 \E[y^{I}] - \sum_{E:x\in E\subsetneq I} \E[y^{I\setminus E}] \kappa[y_E]\, .
\end{align}
Let us also recall that both cumulants and Wick polynomials are multilinear and permutation invariant. 

If the random variables $y_j$, $j=1,2,\ldots,n$, have joint exponential moments, 
then moments, 
cumulants and Wick polynomials can also be easily generated by differentiation of their 
respective generating functions which are
\begin{align}\label{eq:defallgenf}
\Mgen(\lambda) := \E[\rme^{\lambda\cdot x}]\, , \quad
\Cgen(\lambda) := \ln \Mgen(\lambda) \qand
\Wgen(\lambda; y) := \frac{\rme^{\lambda\cdot y}}{\E[\rme^{\lambda\cdot x}]} =
\rme^{\lambda\cdot y-\Cgen(\lambda)}
\, .
\end{align}
By evaluation of the $I$-th derivative at zero, we have
\begin{align}
\E[y^I] = \partial^I_\lambda\Mgen(0)\, , \quad
\kappa[y_I]= \partial^I_\lambda\Cgen(0) \qand
\w{y^I} = \partial^I_\lambda\Wgen(0;y)
\, ,
\end{align}
where ``$\partial^I_\lambda$'' is a shorthand notation for 
$\partial_{\lambda_{i_1}}\partial_{\lambda_{i_2}}\cdots 
\partial_{\lambda_{i_n}}$.

It is remarkable that expectations of products of Wick polynomials can be expanded in terms of cumulants,
merely cancelling some terms from the standard moments-to-cumulants expansion.
The following result, proven as Proposition 3.8 in \cite{LM15},
details the result using the above notations:
\begin{proposition}\label{th:W_mult_prop}
Assume that the measure $\mu$ has all moments of order $N$, i.e.,  suppose that 
$\E[|y^I|]<\infty$ for all $I\in \mathcal{I}$ with 
$|I|\le N$.  Suppose $L\ge 1$ is given and
consider a collection of $L+1$ index sequences
$J',J_\ell\in\mathcal{I}$, $\ell=1,\ldots,L$, such that $|J'|+\sum_\ell 
|J_\ell|\le N$.
Then for $I:= \sum_{\ell=1}^L J_\ell + J'$ 
(with the implicit identification of $J_\ell$ and $J'$ with the set of its 
labels in $I$) we have 
\begin{equation}
\E \bigg[ \prod_{\ell=1}^L \w{y^{J_\ell} }  y^{J'}\bigg] = \sum_{\pi \in 
\pazocal{P}(I)}\prod_{A \in \pi}
\!\left(\kappa[y_A] \cf(A \not\subset J_\ell\  \forall \ell)\right) \, .
\label{wick_prod_multi}
\end{equation}
\end{proposition}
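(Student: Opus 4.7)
The plan is to prove the identity by differentiating the joint generating function of the Wick polynomials and the monomial. I would introduce independent formal parameters $\lambda^{(\ell)}$ for $\ell=1,\ldots,L$ and $\mu$, each indexed by the same set $J$ as the $y_j$'s. Using the generating function identities recalled in the Appendix, the left-hand side of (\ref{wick_prod_multi}) can be rewritten as
\begin{align*}
\E\bigg[\prod_{\ell=1}^L \w{y^{J_\ell}} \, y^{J'}\bigg]
= \partial^{J'}_\mu \prod_{\ell=1}^L \partial^{J_\ell}_{\lambda^{(\ell)}}
\E\bigg[\prod_{\ell=1}^L \Wgen(\lambda^{(\ell)}; y)\, \rme^{\mu\cdot y}\bigg]\bigg|_{\lambda^{(\cdot)}=0,\, \mu=0}.
\end{align*}
Substituting $\Wgen(\lambda; y) = \rme^{\lambda\cdot y - \Cgen(\lambda)}$ and setting $\Lambda := \mu + \sum_{\ell=1}^{L}\lambda^{(\ell)}$, the product inside the expectation collapses to $\rme^{\Lambda\cdot y - \sum_\ell \Cgen(\lambda^{(\ell)})}$, and taking the expectation turns the argument into a single exponential:
\begin{align*}
\E\bigg[\prod_{\ell=1}^L \Wgen(\lambda^{(\ell)}; y)\, \rme^{\mu\cdot y}\bigg] = \exp\bigl(f(\lambda^{(\cdot)},\mu)\bigr),
\qquad f := \Cgen(\Lambda) - \sum_{\ell=1}^{L}\Cgen(\lambda^{(\ell)}).
\end{align*}

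Since $\Cgen(0)=0$, one has $f(0)=0$, so the standard exponential (connected/cluster) formula applies:
\begin{align*}
\partial^I \rme^f\Big|_0 = \sum_{\pi \in \mathcal{P}(I)} \prod_{A\in \pi}\partial^A f\Big|_0,
\end{align*}
where the total multi-index $I$ is distributed among the $\lambda^{(\ell)}$ and $\mu$ slots according to the source sets $J_\ell$ and $J'$ (this is where the identification of $J_\ell$ and $J'$ with their labels in $I$ is used). The decisive step is then a case analysis of $\partial^A f|_0$ for a block $A \in \pi$. By the chain rule, every derivative in $A$ acts on $\Cgen(\Lambda)$ as $\partial_{\Lambda_j}$, producing $\kappa[y_A]$ at $\Lambda=0$; on $\Cgen(\lambda^{(\ell')})$, only derivatives of the form $\partial_{\lambda^{(\ell')}}$ survive. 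Hence, if $A \subset J_{\ell_0}$ for some $\ell_0$, the contribution from $\Cgen(\Lambda)$ equals $\kappa[y_A]$ and is exactly cancelled by the contribution of $-\Cgen(\lambda^{(\ell_0)})$, giving $\partial^A f|_0 = 0$; if no single $J_\ell$ contains $A$ (in particular when $A\cap J' \neq \emptyset$), then each $\partial^A \Cgen(\lambda^{(\ell)})$ vanishes identically because some derivative variable in $A$ does not appear in $\lambda^{(\ell)}$, leaving $\partial^A f|_0 = \kappa[y_A]$. Substituting these two outcomes into the exponential formula reproduces exactly the sum in (\ref{wick_prod_multi}).

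The main technical obstacle is that the generating functions $\Mgen$, $\Cgen$, $\Wgen$ nominally require exponential moments, whereas the hypothesis only provides moments up to order $N$. I would resolve this by interpreting the above manipulations at the level of formal power series in $\lambda^{(\cdot)}$ and $\mu$ truncated at total degree $N$: every Taylor coefficient of order $\le N$ that appears is a polynomial in finite-order moments of the $y_j$, so each step is a genuine polynomial identity, valid under the hypothesis $\E[|y^E|]<\infty$ for $|E|\le N$. An equivalent route avoiding generating functions altogether would proceed by induction on $|I|$, using the recursive definitions (A.3) and (A.4) and matching terms according to whether a given block is confined to a single $J_\ell$ or crosses several of them; however, the generating function approach condenses all of the combinatorial bookkeeping into the single exponential-formula identity and the two-line cancellation argument above.
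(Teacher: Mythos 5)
Your argument is correct: the cancellation $\partial^A\bigl(\Cgen(\Lambda)-\sum_\ell \Cgen(\lambda^{(\ell)})\bigr)\big|_0=\kappa[y_A]\,\cf(A\not\subset J_\ell\ \forall\ell)$ combined with the exponential formula yields exactly \eqref{wick_prod_multi}, and your truncated formal-power-series reading disposes of the exponential-moment issue. The paper itself does not prove this statement but defers to Proposition 3.8 of \cite{LM15}, whose proof proceeds by essentially the same generating-function computation, so your route is the intended one rather than a genuinely different alternative.
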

In words, the constraint 
determined by the characteristic functions on the right hand side of \eqref{wick_prod_multi}
amounts to removing from the standard cumulant expansion all terms which have any clusters internal to one of the sets $J_\ell$. 
For instance, thanks to Proposition \ref{th:W_mult_prop}, if we consider the expectation of the product of two second order Wick polynomials,
we get 
$\E[\w{y_1y_2} \w{y_3y_4}]= \kappa( y_1 , y_3   )\kappa( y_2 , y_4)+ \kappa( y_1 , y_4   )\kappa( y_2 , y_3)+\kappa(y_1,y_2,y_3,y_4)$.

Proposition \ref{th:W_mult_prop} turns out to be a powerful technical tool, used several times in the proofs of Theorems \ref{th_hom} and \ref{th_weigheted}.


\end{document}